\theoremstyle{plain}
\newtheorem{thm}{Theorem}[section]
\newtheorem{proposition}[thm]{Proposition}
\newtheorem*{thm*}{Theorem}
\newtheorem*{lemma*}{Lemma}
\newtheorem*{prop*}{Proposition}
\newtheorem*{cor*}{Corollary}
\newtheorem*{conj*}{Conjecture}
\theoremstyle{definition}
\newtheorem{defn}[thm]{Definition}
\newtheorem*{defn*}{Definition}
\newtheorem{ex}[thm]{Example}
\newtheorem{rmk}[thm]{Remark}
\newcommand{\cm}{\mathcal{M}}
\newcommand{\ind}{\mbox{$\perp \kern-5.5pt \perp$}}
\newcommand{\cash}[1]{{\color{blue}(\textbf{\$:} #1)}} %
\newcommand{\john}[1]{{\color{red}(\textbf{J:} #1)}} %
\newcommand{\dev}[1]{{\color{orange}(\textbf{D:} #1)}} %
\newcommand{\zaia}[1]{{\color{purple}(\textbf{Z:} #1)}} %
\title{Graph-based proofs of indistinguishability of linear compartmental models}
\author[Bortner]{Cashous Bortner}
\address{California State University, Stanislaus}
\author[Gilliana]{John Gilliana}
\address{California State University, Stanislaus}
\author[Patel]{Dev Patel}
\address{California State University, Stanislaus}
\author[Tamras]{Zaia Tamras}
\address{California State University, Stanislaus}
\date{\today}
\begin{document}
\maketitle

\begin{abstract}
Given experimental data, one of the main objectives of biological modeling is to construct a model which best represents the real world phenomena.  In some cases, there could be multiple distinct models exhibiting the exact same dynamics, meaning from the modeling perspective it would be impossible to distinguish which model is ``correct.''  This is the study of \textit{indistinguishability} of models, and in our case we focus on linear compartmental models which are often used to model pharmacokinetics, cell biology, ecology, and related fields. Specifically, we focus on a family of linear compartmental models called skeletal path models which have an underlying directed path, and have recently been shown to have the first recorded sufficient conditions for indistinguishability based on underlying graph structure.  In this recent work, certain families of skeletal path models were proven to be indistinguishable, however the proofs relied heavily on linear algebra.  In this work, we reprove several of these indistinguishability results instead using a graph theoretic framework.
\end{abstract}

\normalem

\section{Background} \label{sec:intro}

\subsection{Graph Theory}

\begin{defn}
    A \textit{simple undirected graph} is defined as an ordered pair $(V,E)$, where $V$ (or $V(G)$) is a finite set called \textit{vertices}, and $E$ (or $E(G)$) is a subset of all 2-element subsets of $V$ called \textit{edges}.      A \textit{simple directed graph} is a pair $(V,E)$, where $V$ is a finite set, and $E$ is a subset of $V \times V$.

\end{defn}

\begin{ex}\label{ex:graphs}
The {undirected} graph $G=\left(\{1,2,3,4\},\left\lbrace\{1,2\},\{2,3\},\{3,4\} \right\rbrace \right)$, as seen in \cref{fig:graphs}, has the set of vertices $V(G)=\{1,2,3,4\}$ and the set of edges $E(G) = \left\lbrace\{1,2\},\{2,3\},\{3,4\} \right\rbrace$.  On the other hand, $G' = \left(\{1,2,3,4\},\left\lbrace(1,2),(2,3),(3,4) \right\rbrace \right)$ is the directed graph where the set of vertices $V(G)=\{1,2,3,4\}$ and the set of {directed} edges $E(G) = \left\lbrace(1,2),(2,3),(3,4) \right\rbrace$ represented by arrows as seen in \cref{fig:graphs}.  Note too that we can add {edge labels} (or weights) $a_{21},a_{32},$ and $a_{43}$ corresponding to each edge to $G'$.
\end{ex}

\begin{figure}
        \centering
        \begin{tikzpicture}[scale=1]
 	\draw (0,0) circle (0.3);
 	\draw (2,0) circle (0.3);
 	\draw (4,0) circle (0.3);
        \draw (6,0) circle (0.3);
    	\node[] at (0, 0) {1};
    	\node[] at (2, 0) {2};
    	\node[] at (4, 0) {$3$};
            \node[] at (6,0) {$4$};
	 \draw[-,thick] (0.35, 0) -- (1.65, 0);
	 \draw[-,thick] (2.35,0) -- (3.65,0);
      \draw[-,thick] (4.35,0) -- (5.65,0);
\draw (-1.2,-2) rectangle (7, 2);
    	\node[] at (3, -1.5) {$G$};

     \end{tikzpicture}\begin{tikzpicture}[scale=1]
 	\draw (0,0) circle (0.3);
 	\draw (2,0) circle (0.3);
 	\draw (4,0) circle (0.3);
        \draw (6,0) circle (0.3);
    	\node[] at (0, 0) {1};
    	\node[] at (2, 0) {2};
    	\node[] at (4, 0) {$3$};
            \node[] at (6,0) {$4$};
	 \draw[->,thick] (0.35, 0) -- (1.65, 0);
	 \draw[->,thick] (2.35,0) -- (3.65,0);
      \draw[->,thick] (4.35,0) -- (5.65,0);
   	 \node[] at (1, 0.3) {$a_{21}$};
	\node[] at (3,0.3) {$a_{32}$};
        \node[] at (5,0.3) {$a_{43}$};
\draw (-1.2,-2) rectangle (7, 2);
    	\node[] at (3, -1.5) {$G'$};
    	

\end{tikzpicture}
\caption{On the left is an \textit{undirected} graph $G$ and on the right is a \textit{directed} graph $G'$, both defined in \cref{ex:graphs}.} 
\label{fig:graphs}
\end{figure}
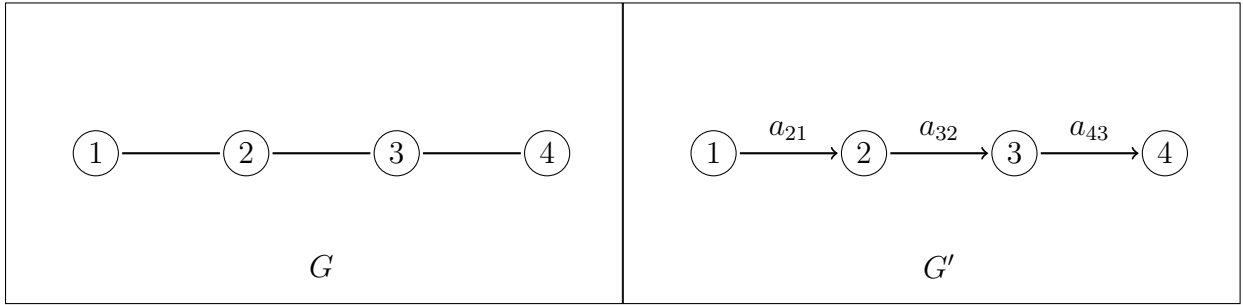

There are many different families of graphs of interest to mathematicians and modelers alike, including cycles, forests, and incoming forests.

\begin{defn}
    A \textit{cycle} is a sequence of edges in an undirected graph that starts and ends on the same vertex without repeating edges.  We can characterize an undirected graph as a \textit{forest} if it contains no cycles.  An \textit{incoming forest} is a directed graph that satisfies two conditions, namely that the underlying, undirected graph (generated by removing the direction of the directed edges) has no cycles, and no vertex in the graph has more than one outgoing edge. 
 \end{defn}

From this definition of incoming forest, we can define the set of all incoming forests within a graph $G$ with $k$ edges to be $\mathcal{F}_k(G)$.  We can also define the incoming forests on a graph $G$ with $k$ edges that also include a path from vertex $i$ to vertex $j$ as $\mathcal{F}_k^{i,j}(G)$.

\begin{ex} 
    The graph $G$ from \cref{ex:graphs} seen in \cref{fig:graphs} is a forest since it has no cycles.  Also, $G'$ from \cref{ex:graphs} is an incoming forest since its underlying undirected graph is $G$ (no cycles), and no vertex in the graph has more than one outgoing edge.  Specifically, each of the first three vertices has exactly one outgoing edge, while the fourth vertex has zero outgoing edges.

    We can also see that the set of incoming forests with two edges in $G'$ is $$\mathcal{F}_2(G') = \left\lbrace \{1 \to 2, 2 \to 3\} , \ \{1 \to 2, 3 \to 4\}, \ \{2 \to 3, 3 \to 4\} \right\rbrace. $$
    The set of incoming forests with two edges in $G'$ and a path from $1$ to $2$ is
    \[
    \mathcal{F}_2^{1,2}(G') = \left\lbrace \{1 \to 2, 2 \to 3\} , \ \{1 \to 2, 3 \to 4\} \right\rbrace.
    \]
\end{ex}

\begin{defn}
    The \textit{productivity} of a graph $G$ with edge set $E(G)$ is the product of its edge labels:
\begin{align} \label{eq:productivity}
	\pi_G ~:=~ \prod_{ e \in E(G) } L(e)~,
\end{align}
where $L(e)$ is the label of edge $e$.
Following the usual convention, we define $\pi_G=1$ for graphs $G$ having no edges.  
\end{defn}

\begin{ex} 
    The productivity of the graph $G'$ from \cref{ex:graphs} seen in \cref{fig:graphs} is 
    \[
    \pi_{G'} = a_{21}a_{32}a_{43}.
    \]
\end{ex}


\subsection{Linear Compartmental Models}

\begin{defn}
    A \textit{linear compartmental model} (LCM) is a model $\cm=(G,In,Out,Leak)$ where $G = (V, E)$ is a directed graph consisting of vertices $V$ and edges $E$, with input and output vertices along with leaks $In, Out, Leak\subseteq V$.
\end{defn}

There are several families of linear compartmental models of interests to both modelers and biologists.  Linear compartmental models have been used to study pharmacokinetics, ecology, and epidemology \cite{godfrey,dipiro2010concepts,hedaya2012basic,tozer1981concepts,wagner1981history,blackwood2018introduction,gydesen1984mathematical}.  Physiological models could involve metabolism, biliary, or excretory pathways \cite{distefano-book}.  Previous work has been done considering certain families of linear compartmental models, including cycle, mammillary, catenary, and tree models \cite{singularlocus,aim,MeshkatSullivantEisenberg,CJSSS}. 

\begin{defn}\label{def:skeletalpathmodels}  A model $\cm = (G, In, Out, Leak)$ is a \textit{skeletal path model} if $G$ contains a directed path from compartment $1$ to $n$, with $In=\{1\}$ and $Out=\{n\}$.  In other words, skeletal path models will have a graph that contains a ``backbone'' of a path along with other vertices, edges, or leaks.
\end{defn}

\begin{ex}\label{ex:path}
    Consider the skeletal path model $\cm_3= (P_4, \{1\},\{4\},\{3\})$ seen in Figure \ref{fig:path} where $P_4$ is the directed path from vertex $1$ to vertex $4$ with edges $\{1\to 2, \ 2\to 3,\ 3 \to 4\}$.  We will denote the parameters of this model as $\mathcal{P}(\cm_3)=\{a_{03},a_{21},a_{32},a_{43}\}$.

    \begin{figure}
        \centering
        \begin{tikzpicture}[scale=1]
 	\draw (0,0) circle (0.3);
 	\draw (2,0) circle (0.3);
 	\draw (4,0) circle (0.3);
        \draw (6,0) circle (0.3);
    	\node[] at (0, 0) {1};
    	\node[] at (2, 0) {2};
    	\node[] at (4, 0) {$3$};
            \node[] at (6,0) {$4$};
	 \draw[->] (0.35, .1) -- (1.65, .1);
	 \draw[->] (2.35,.1) -- (3.65,.1);
      \draw[->] (4.35,.1) -- (5.65,.1);
   	 \node[] at (1, 0.3) {$a_{21}$};
	\node[] at (3,0.3) {$a_{32}$};
        \node[] at (5,0.3) {$a_{43}$};
	\draw (6.69,.69) circle (0.07);	
	 \draw[-] (6.65, .65 ) -- (6.22, .22);
	 \draw[->] (-.65, .65) -- (-.25, .25);	
   	 \node[] at (-.8,.8) {in};
	 \draw[->] (4,-.3) -- (4, -.9);	
   	 \node[] at (4.35, -.7) {$a_{03}$};
\draw (-1.4,-2) rectangle (7, 2);
    	\node[] at (3, -1.5) {$\cm_3 = (P_4, \{1\},\{4\},\{3\})$};
    	

\end{tikzpicture}
        \caption{The model described in Example \ref{ex:path}}
        \label{fig:path}
    \end{figure}
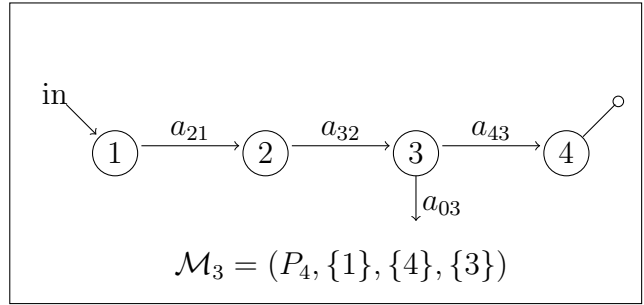
    
\end{ex}

Now that we have shown an example of a particular skeletal path model, we will use it to motivate the definition of \textit{differential equations} regarding LCMs and a \textit{compartmental matrix} of an LCM.

\begin{defn}
    A \textit{compartmental matrix} $A$ corresponding to a linear compartmental model $\cm=(G,In,Out,Leak)$ is given by:
\[
  A_{ij} = \left\{ 
  \begin{array}{l l l}
    -a_{0i}-\sum_{k: i \rightarrow k \in E}{a_{ki}} & \quad \text{if $i=j$ and } i \in Leak\\
        -\sum_{k: i \rightarrow k \in E}{a_{ki}}     & \quad \text{if $i=j$ and } i \notin Leak\\
    a_{ij} & \quad \text{if $j\rightarrow{i}$ is an edge of $G$}\\
    0 & \quad \text{otherwise}\\
  \end{array} \right.
\]
\end{defn}
From this definition of the compartmental matrix we can build the associated linear system of ordinary differential equations as follows: \\
Given the inputs and outputs associated to the quadruple
$(G, In, Out, Leak)$,
\begin{equation*} \label{eq:main}
\dot{x}(t)=Ax(t)+u(t)  \quad \quad y_j(t)=x_j(t)  \mbox{ for } j \in Out
\end{equation*}
 where $u_{i}(t) = 0$ for $i \notin In$.
 The coordinate functions $x_{i}(t)$ are the state variables, the 
 functions $y_{j}(t)$ are the output variables, and the nonzero functions $u_{i}(t)$ are
 the inputs.  The resulting model is called a   \textit{linear compartmental model}. 

Note that the compartmental matrix associated to a linear compartmental model with no leaks has the form of a weighted Laplacian matrix of the underlying graph.

\begin{ex}
    The system of differential equations defining the model $\cm_3$ in \cref{ex:path} is given by

    \begin{align}
\label{eq:iofind1}        x_1' &= -a_{21}x_1+u_1\\
\label{eq:iofind2}        x_2' &= a_{21}x_1 - a_{32}x_2\\
\label{eq:iofind3}        x_3' &= a_{32}x_2-(a_{03}+a_{43})x_3\\
\label{eq:iofind4}        x_4' &= a_{43}x_3 \\
\label{eq:iofind5}        y_4 &= x_4
    \end{align}
    or in matrix form

    \begin{align*}
        \begin{pmatrix}
            x_1' \\ x_2' \\ x_3' \\ x_4'
        \end{pmatrix} &= \begin{pmatrix}
            -a_{21} & 0 & 0 & 0 \\
            a_{21} & -a_{32} & 0 & 0 \\
            0 & a_{32} & -a_{03} - a_{43} & 0 \\
            0 & 0 & a_{43} & 0
        \end{pmatrix}
        \begin{pmatrix}
            x_1 \\ x_2 \\ x_3 \\ x_4
        \end{pmatrix} + \begin{pmatrix}
            u_1 \\ 0 \\0 \\ 0
        \end{pmatrix} \\
        \\
        y_4 &=x_4.
    \end{align*}
\end{ex}

\begin{defn}
    An \textit{input-output equation} of a linear compartmental model $\cm=(G,\{in\}\{out\},Leak\}$ with exactly one input vertex and one output vertex consists of an ordinary differential equation in the input and output variables $u_{in}$ and $y_{out}$ with coefficients $\lbrace c_0, c_1, \dots, c_n, d_0,d_1,\ldots , d_{n-1}\rbrace$ that are functions of the parameter edge weights: \\
    \\
     $ y_{out}^{(n)} + c_{n-1}  y_{out}^{(n-1)} + \dots  + c_1 y_{out}' + c_0 y_{out} ~=~ 
		d_{n-1} u_{in}^{(n-1)} + d_{n-2} u_{in}^{(n-2)} +
			 \dots  + d_{1} u_{in}' + d_{0} u_{in}.$ \\

\end{defn}

\begin{ex} \label{ex:ioequation}
One method of finding an input-output equation associated to the model $\cm_3$ in \cref{ex:path} is to use differential elimination and substitution to eliminate all of the state variables $x_i$.  First, we take the fact that $y_4=x_4$ in \cref{eq:iofind5}, take the derivative of this equality, $y_4'=x_4'$, and substitute in \cref{eq:iofind4} yielding
\[
y_4' = a_{43}x_3.
\]
Solving for $x_3$, we get $x_3 = \frac{1}{a_{43}}y_4'$, which we can differentially substitute into \cref{eq:iofind3}, to get
\begin{align*}
\frac{1}{a_{43}}y_4'' &= a_{32}x_2 -(a_{03}+a_{43})\frac{1}{a_{43}}y_4' \\
&\frac{1}{a_{43}}y_4'' = a_{32}x_2 - \left( \frac{a_{03}}{a_{43}} + 1 \right) y_4'.
\end{align*}

Now solving for $x_2$, we get
\[
x_2 = \frac{1}{a_{43}a_{32}}y_4'' + \left( \frac{a_{03}}{a_{43}a_{32}} + \frac{1}{a_{32}} \right) y_4'
\] \\
and taking the derivative we get
\begin{align*}  
x_2' = \frac{1}{a_{43}a_{32}}y_4''' + \left( \frac{a_{03}}{a_{43}a_{32}} + \frac{1}{a_{32}} \right) y_4''
\end{align*}

Now, differentially substitute into \cref{eq:iofind2} to get
 
 \begin{align*}
     \frac{1}{a_{43}a_{32}}y_4''' + \left( \frac{a_{03}}{a_{43}a_{32}} + \frac{1}{a_{32}} \right) y_4''
 &= a_{21}x_1-a_{32}\left( \frac{1}{a_{43}a_{32}}y_4'' + \left( \frac{a_{03}}{a_{43}a_{32}} + \frac{1}{a_{32}} \right) y_4' \right) \\
 & \\
 &= a_{21}x_1-\frac{a_{32}}{a_{43}}y_4'' -  \left( \frac{a_{03}}{a_{43}} + 1 \right) y_4'
 \end{align*}

Now solving for $x_1$ we have the following :

\[
x_1 = \frac{1}{a_{43}a_{32}}y_4''' + \left( \frac{a_{03}}{a_{43}a_{32}} + \frac{1}{a_{32}} + \frac{a_{32}}{a_{43}} \right) y_4'' + \left( \frac{a_{03}}{a_{43}} + 1 \right) y_4'
\]

Differentiating, we get
\[
x_1' = \frac{1}{a_{43}a_{32}}y_4^{(4)} + \left( \frac{a_{03}}{a_{43}a_{32}} + \frac{1}{a_{32}} + \frac{a_{32}}{a_{43}} \right) y_4''' + \left( \frac{a_{03}}{a_{43}} + 1 \right) y_4''
\]

Now, we can differentially substitute into \cref{eq:iofind1} to get the following:

\begin{align*}
    \frac{1}{a_{43}a_{32}}y_4^{(4)} &+ \left( \frac{a_{03}}{a_{43}a_{32}} + \frac{1}{a_{32}} + \frac{a_{32}}{a_{43}} \right) y_4''' + \left( \frac{a_{03}}{a_{43}} + 1 \right) y_4''  \\ 
    & = -a_{21} \left( \frac{1}{a_{43}a_{32}}y_4''' + \left( \frac{a_{03}}{a_{43}a_{32}} + \frac{1}{a_{32}} + \frac{a_{32}}{a_{43}} \right) y_4'' + \left( \frac{a_{03}}{a_{43}} + 1 \right) y_4' \right) +u_1
\end{align*}

Now we can move our output variables to the left and our input variable to the right, and multiply by a common factor of $a_{32}a_{43}$ to clear the denominators yielding:

\begin{align*}y_{4}^{(4)}+(a_{21}+a_{32}+a_{03}+a_{43})y_{4}^{(3)} &+ (a_{21}a_{32} + a_{21}a_{03} + a_{21}a_{43} +  a_{32}a_{03} + a_{32}a_{43})y_{4}^{(2)} \\ 
&+(a_{21}a_{32}a_{43} + a_{21}a_{32}a_{03})y_{4}^{(1)}  = (a_{21}a_{32}a_{43})u_{1}
\end{align*}

\end{ex}

There are other methods for determining an input-output equation of a linear compartmental model, including a clever use of Cramer's Rule \cite{MeshkatSullivantEisenberg}, or a use of transfer functions \cite{Ovchinnikov-Pogudin-Thompson}.  Unfortunately, these methods do not yield much intuition into the relationship between the input-output equation[s] (and corresponding characteristics), and the underlying graph of the model.  Fortunately, as described in the next section, there is a method for computing an input-output equation of a linear compartmental model directly utilizing the structure of the underlying graph.

From an input-output equation corresponding to a model, we can define a polynomial map from the space of parameter values to the space of coefficients as follows:
\begin{defn}
    The \textit{coefficient map} corresponding to a linear compartmental model is the map from the space of parameters to the space of coefficients of its input-output equation.
\end{defn}

\begin{ex}
 The coefficient map of the model $\cm_3$ seen in \cref{ex:path} with corresponding input-output equation seen in \cref{ex:ioequation} is given by:
    \begin{align*}
    \mathbf{c}: \mathbb{R}^4 &\to \mathbb{R}^4 \\
    \begin{pmatrix}
        a_{03} \\ 
        a_{21} \\
        a_{32} \\
        a_{43}
    \end{pmatrix} &\mapsto 
    \begin{pmatrix}
        a_{21}+a_{32}+a_{03}+a_{43} \\
        a_{21}a_{32} + a_{21}a_{03} + a_{21}a_{43} +  a_{32}a_{03} + a_{32}a_{43} \\
        a_{21}a_{32}a_{43} + a_{21}a_{32}a_{03}\\ 
        a_{21}a_{32}a_{43}
    \end{pmatrix}
    \end{align*}
\end{ex}

There are many interesting questions related to the coefficient map of linear compartmental models.  For example, modelers are often interested in whether they can recover (or identify) the parameter values of a model while only knowing information about the input and output equation.  One approach to this \textit{identifiability} problem is to consider the injectivity of this coefficient map. \cite{aim,MeshkatSullivantEisenberg,meshkat-rosen-sullivant,CJSSS,bortner-meshkat,Ovchinnikov-Pogudin-Thompson}.

We consider another characteristic of linear compartmental models which we define in the next section.

\subsection{Indistinguishability}\label{sect:indist}

\begin{defn}
     We say two models $\cm$ and $\cm'$ are \textit{indistinguishable via a permutation of parameters} or \textit{permutation indistinguishable} if models $\cm$ and $\cm'$ have the same input-output equations up to a renaming of the parameters.  This means there is a bijection $\Phi$ from the parameters in $\cm$ to the parameters in $\cm'$ such that the coefficients of $\cm'$ are exactly the coefficients of $\cm$ under $\Phi$.
\end{defn}

There are other, more general notions of indistinguishability as described in \cite{BortnerMeshkat-Indist} and \cite{godfrey-chapman}, however in this work we will focus exclusively on permutation indistinguishability.  Though we will not formally define general indistinguishability, as discussed by Bortner and Meshkat \cite[Proposition 3.13]{BortnerMeshkat-Indist} knowing that if two models are permutation indistinguishable, then they are indistinguishable, though the converse of this statement is not true.

\begin{ex}\label{ex:indist}
 \begin{figure}
        \centering
        \begin{tikzpicture}[scale=1]
 	\draw (0,0) circle (0.3);
 	\draw (2,0) circle (0.3);
 	\draw (4,0) circle (0.3);
        \draw (6,0) circle (0.3);
    	\node[] at (0, 0) {1};
    	\node[] at (2, 0) {2};
    	\node[] at (4, 0) {$3$};
            \node[] at (6,0) {$4$};
	 \draw[->] (0.35, .1) -- (1.65, .1);
	 \draw[->] (2.35,.1) -- (3.65,.1);
      \draw[->] (4.35,.1) -- (5.65,.1);
      \draw[<-] (4.35,-.12) -- (5.65,-.12);
   	 \node[] at (1, 0.3) {$a_{21}$};
	\node[] at (3,0.3) {$a_{32}$};
        \node[] at (5,0.3) {$a_{43}$};
        \node[] at (5, -0.3) {$a_{34}$};
	\draw (6.69,.69) circle (0.07);	
	 \draw[-] (6.65, .65 ) -- (6.22, .22);
	 \draw[->] (-.65, .65) -- (-.25, .25);	
   	 \node[] at (-.8,.8) {in};
\draw (-1.4,-2) rectangle (7, 2);
    	\node[] at (3, -1.5) {$\cm_4 = (H, \{1\},\{4\},\emptyset)$};
    	

\end{tikzpicture}
        \caption{The model described in Example \ref{ex:indist} where $H=P_4 \cup \{4 \to 3\}$}
        \label{fig:indist}
    \end{figure}
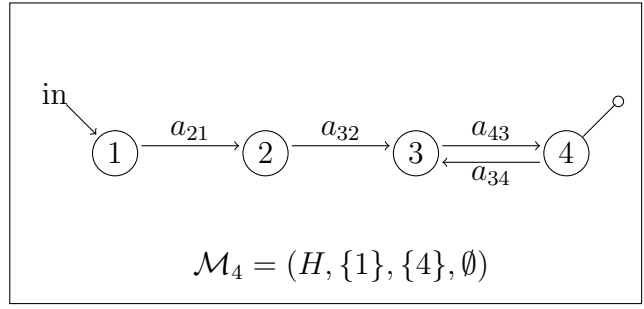
The input-output equation for the model $\cm_4=(H,\{1\},\{4\},\emptyset)$ where $H=P_4 \cup \{4 \to 3\}$ seen in \cref{fig:indist} is given by:

\begin{align*}y_{4}^{(4)}+(a_{21}+a_{32}+{\color{red}a_{34}}+a_{43})y_{4}^{(3)} &+ (a_{21}a_{32} + a_{21}{\color{red}a_{34}} + a_{21}a_{43} +  a_{32}{\color{red}a_{34}} + a_{32}a_{43})y_{4}^{(2)} \\ 
&+(a_{21}a_{32}a_{43} + a_{21}a_{32}{\color{red}a_{34}})y_{4}^{(1)}  = (a_{21}a_{32}a_{43})u_{1}.
\end{align*}
If we compare this to the input-output equation of the model seen in \cref{fig:path} and calculated in \cref{ex:ioequation}:
\begin{align*}y_{4}^{(4)}+(a_{21}+a_{32}+{\color{red}a_{03}}+a_{43})y_{4}^{(3)} &+ (a_{21}a_{32} + a_{21}{\color{red}a_{03}} + a_{21}a_{43} +  a_{32}{\color{red}a_{03}} + a_{32}a_{43})y_{4}^{(2)} \\ 
&+(a_{21}a_{32}a_{43} + a_{21}a_{32}{\color{red}a_{03}})y_{4}^{(1)}  = (a_{21}a_{32}a_{43})u_{1}
\end{align*}
we can see that these two models are permutation indistinguishable via renaming defined by:
\begin{align*}
    \Phi \colon \mathcal{P}(\cm_3) &\to \mathcal{P}(\cm_4) \\
    {\color{red}a_{03}} &\mapsto {\color{red}a_{34}} \\
    a_{21} &\mapsto a_{21} \\
    a_{32} &\mapsto a_{32} \\
    a_{43} &\mapsto a_{43}
\end{align*}

\end{ex}

\subsection{Elementary Symmetric Polynomials}


Now we introduce the notion of elementary symmetric polynomials which will help us in generating input-output equations in our main results.

\begin{defn}
    For $n$ variables $Q = \{x_1, x_2, \ldots , x_n\}$, the $k$-th \textit{elementary symmetric polynomial}, denoted $\sigma_k(Q)$,  is the sum of all distinct products of $k$ variables in $Q$.
\end{defn}

Note that given $n$ variables, the $k$-th elementary symmetric polynomial is a homogeneous polynomial consisting of a sum of $\binom{n}{k}$ monomials each of degree $k$.

\begin{ex}\label{ex:elemsymm}
   Assuming we have $Q=\{x_1,x_2,x_3,x_4\}$, then we can compute the following elementary symmetric polynomials:
   \begin{align*}
   \sigma_0(Q) &= 1 \\ 
   \sigma_1(Q) &= x_1 + x_2 + x_3 + x_4 \\ 
   \sigma_2(Q) &= x_1x_2 + x_1x_3 + x_1x_4 + x_2x_3 + x_2x_4 + x_3x_4 \\ 
   \sigma_3(Q) &= x_1x_2x_3 + x_1x_2x_4 + x_1x_3x_4 + x_2x_3x_4 \\
   \sigma_4(Q) &= x_1x_2x_3x_4
   \end{align*}
\end{ex}

\begin{rmk}
    Moving forward, we assume that $\sigma_0(Q) =1$ and $\sigma_i(Q) = 0$ for $i<0$ or $Q=\emptyset$.

\end{rmk}

\section{Previous Work}

\begin{defn} For a linear compartmental model $\cm=(G,\{in\},\{out\},Leak)$, define the following associated graphs:
\begin{itemize}
    \item $\widetilde{G}$ is the graph $G$ with an added vertex ``0'' such that for every $j\in Leak$, $j\to 0 \in E(\widetilde{G})$, and
    \item $\widetilde{G}^\ast$ is the graph $\widetilde{G}$ such that every edge that leaves the $out$ output node is removed.
\end{itemize}
\end{defn}

\begin{ex}
In \cref{fig:tilde}, on the left we have the $\widetilde{P}_4$ graph associated to the model $\cm_3$ from \cref{ex:path}.  Note that this is also the $\widetilde{P}_4^*$ graph associated to this model since the $\widetilde{P}_4$ graph does not have any edges which leave the output node $4$.

The graph on the right of \cref{fig:tilde} represents the $\widetilde{H}^*$ graph associated to the model $\cm_4$ as defined in \cref{ex:indist}.  Note that the main difference between this graph and that of the original $H$ is that the edge $4\to 3$ must be removed since $4$ is the output vertex.  Also, since $\cm_4$ has no leaks, we ignore the addition of the $0$ vertex.

    \begin{figure}
        \centering
        \begin{tikzpicture}[scale=.9]
 	\draw (0,0) circle (0.3);
 	\draw (2,0) circle (0.3);
 	\draw (4,0) circle (0.3);
        \draw (6,0) circle (0.3);
    	\node[] at (0, 0) {1};
    	\node[] at (2, 0) {2};
    	\node[] at (4, 0) {$3$};
            \node[] at (6,0) {$4$};
	 \draw[->] (0.35, .1) -- (1.65, .1);
	 \draw[->] (2.35,.1) -- (3.65,.1);
      \draw[->] (4.35,.1) -- (5.65,.1);
   	 \node[] at (1, 0.3) {$a_{21}$};
	\node[] at (3,0.3) {$a_{32}$};
        \node[] at (5,0.3) {$a_{43}$};
	\draw[gray] (6.69,.69) circle (0.07);	
	 \draw[-,gray] (6.65, .65 ) -- (6.22, .22);
	 \draw[->,gray] (-.65, .65) -- (-.25, .25);	
   	 \node[] at (-.8,.8) {\color{gray}in};
	 \draw[->] (4,-.3) -- (4, -.9);	
   	 \node[] at (4.35, -.7) {$a_{03}$};
     \draw (4,-1.25) circle (.3);
        \node[] at (4,-1.25) {0};
\draw (-1.4,-2) rectangle (7, 2);
    	\node[] at (2, -1.5) {$\widetilde{P}_4$ for $\cm_3$};
    	

\end{tikzpicture}\begin{tikzpicture}[scale=.9]
 	\draw (0,0) circle (0.3);
 	\draw (2,0) circle (0.3);
 	\draw (4,0) circle (0.3);
        \draw (6,0) circle (0.3);
    	\node[] at (0, 0) {1};
    	\node[] at (2, 0) {2};
    	\node[] at (4, 0) {$3$};
            \node[] at (6,0) {$4$};
	 \draw[->] (0.35, .1) -- (1.65, .1);
	 \draw[->] (2.35,.1) -- (3.65,.1);
      \draw[->] (4.35,.1) -- (5.65,.1);
   	 \node[] at (1, 0.3) {$a_{21}$};
	\node[] at (3,0.3) {$a_{32}$};
        \node[] at (5,0.3) {$a_{43}$};
	\draw[gray] (6.69,.69) circle (0.07);	
	 \draw[-,gray] (6.65, .65 ) -- (6.22, .22);
	 \draw[->,gray] (-.65, .65) -- (-.25, .25);	
   	 \node[] at (-.8,.8) {\color{gray}in};
\draw (-1.4,-2) rectangle (7, 2);
    	\node[] at (3, -1.5) {$\widetilde{H}^*$ for $\cm_4$};
    	

\end{tikzpicture}

        \caption{On the left is the $\widetilde{P}_4$ graph corresponding to $\cm_3$ as defined in \cref{ex:path}.  On the right is the $\widetilde{H}^*$ graph corresponding to $\cm_4$ as defined in \cref{ex:indist}.}
        \label{fig:tilde}
    \end{figure}

\end{ex}

These graphs associated to the underlying graph of a model $\cm$ can be used to generate the input-output equation of a model using the following result:

\begin{thm}[Theorem 3.1 from \cite{aim}]\label{thm:ioeq}
Consider a linear compartmental model $\mathcal{M}=(G, \{in\}, \{out\},  Leak)$ with a single input and output and let $n=|V(G)|$ denote the number of compartments.  Write the input-output equation as: 
	\begin{equation}\label{eq:inout}
	y_{out}^{(n)}+c_{n-1}y_{out}^{(n-1)}+\cdots + c_1y_{out}'+c_0y_{out}
	 ~= ~ d_{n-1}u_{in}^{(n-1)} + d_{n-2} u_{in}^{(n-2)}+ \cdots +d_1u_{1}'+d_0u_{in}~.
	\end{equation} 
Then the coefficients of this input-output equation are as follows { (where $\pi_F$ is as in~\eqref{eq:productivity})}:
\begin{align*}
c_i &= \sum_{F \in \mathcal{F}_{n-i}(\widetilde{G})} \pi_F \quad \text{ for } i=0,1,\ldots , n-1~, \ \text{ and } \\
d_i &= \sum_{F \in \mathcal{F}_{n-i-1}^{in,out}(\widetilde{G}^\ast)} \pi_F \quad \text{ for } i=0,1,\ldots, n-2~.
\end{align*}
\end{thm}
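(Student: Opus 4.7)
The plan is to derive both formulas from a determinantal presentation of the input-output equation and then interpret the resulting polynomials combinatorially via the Matrix-Tree Theorem. Starting from the state equations $\dot{x} = Ax + u$ with $y_{out} = x_{out}$, Cramer's rule (applied in the differential ring $\mathbb{R}[d/dt]$) produces the identity $\det(\partial I - A) y_{out} = (-1)^{in+out} \det\bigl((\partial I - A)^{in,out}\bigr) u_{in}$, where $(\partial I - A)^{j,i}$ denotes the matrix obtained by deleting row $j$ and column $i$. Matching this to \cref{eq:inout} identifies $c_i$ with the coefficient of $\partial^i$ in the characteristic polynomial $\det(\partial I - A)$, and $d_i$ with (up to sign) the coefficient of $\partial^i$ in the cofactor $\det\bigl((\partial I - A)^{in,out}\bigr)$.

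For the $c_i$'s, expanding $\det(\partial I - A)$ by multilinearity gives $c_i = (-1)^{n-i} \sum_{|R|=i} \det(A_{V(G) \setminus R})$, where $A_S$ denotes the principal submatrix of $A$ indexed by $S$. Because $-A$ coincides with the weighted in-Laplacian of $\widetilde{G}$ after suppressing the row and column for vertex $0$, the standard All-Minors Matrix-Tree Theorem yields $\det\bigl((-A)_{V(G) \setminus R}\bigr) = \sum_{F} \pi_F$, where $F$ ranges over spanning in-forests of $\widetilde{G}$ whose non-zero roots are exactly the vertices of $R$. Summing over all $R$ with $|R| = i$ partitions $\mathcal{F}_{n-i}(\widetilde{G})$ by root set, and the sign $(-1)^{n-i}$ cancels against the identity $\det(-A_S) = (-1)^{n-i}\det(A_S)$, producing the stated positive expression.

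For the $d_i$'s, the non-principal cofactor $\det\bigl((\partial I - A)^{in, out}\bigr)$ arises from deleting row $in$ and column $out$. The key structural observation is that column $out$ of $A$ records all information about the out-edges of vertex $out$, so this deletion is equivalent to computing the analogous cofactor for the graph $\widetilde{G}^*$ rather than $\widetilde{G}$. Expanding this cofactor in powers of $\partial$ and invoking the All-Minors Matrix-Tree Theorem for non-principal minors expresses the coefficient of $\partial^i$ as a sum over configurations consisting of a directed path from $in$ to $out$ together with a spanning in-forest on the remaining vertices, totalling $n - i - 1$ edges. These configurations are exactly the elements of $\mathcal{F}_{n-i-1}^{in,out}(\widetilde{G}^*)$, and the signs in the cofactor expansion combine with the $(-1)^{in+out}$ prefactor to yield the stated positive sum of productivities.

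The main obstacle I anticipate is careful sign bookkeeping throughout, and in the $d_i$ case articulating the bijection between monomials in the non-principal minor expansion and elements of $\mathcal{F}_{n-i-1}^{in,out}(\widetilde{G}^*)$. This bijection must pair one distinguished $in$-to-$out$ path with a spanning in-forest on the remaining vertices, and must justify why no forest edge leaves $out$ (equivalently, why the passage to $\widetilde{G}^*$ loses no contributions and introduces no extras). Once this is settled, the factorization $\pi_F$ over edges reads off directly from the monomial structure, and the theorem follows.
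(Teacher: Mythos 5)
This theorem is imported verbatim from \cite{aim} and the present paper offers no proof of its own, so there is nothing internal to compare against; your outline --- Cramer's rule applied to $\det(\partial I - A)$, then the all-minors Matrix-Tree Theorem interpreting the principal minors (partitioned by root set) for the $c_i$ and the non-principal $(in,out)$-cofactor for the $d_i$ --- is precisely the standard argument used in that source (the authors even left the Cramer's-rule presentation as a commented-out proposition). The points you flag as delicate, namely the sign bookkeeping and the fact that deleting column $out$ removes exactly the out-edges of the output vertex and so implements the passage to $\widetilde{G}^\ast$, are the right ones and do go through.
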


Note that we have written this theorem for the case of a single input and output, but the original statement of the theorem will give input-output equations for multiple inputs and/or outputs.

\begin{ex}\label{ex:iobygraph} 



First, we can find the incoming forests on $\widetilde{P}_4$ from the model $\cm_3$ in \cref{ex:path} based on the number of edges (and represented by edge weight for ease) as:
\begin{align*}
    \mathcal{F}_1\left(\widetilde{P}_4\right) &= \left\lbrace \{a_{03}\}, \ \{a_{21}\},\ \{a_{32}\},\ \{a_{43}\}
    \right\rbrace \\
        \mathcal{F}_2\left(\widetilde{P}_4\right) &= \left\lbrace \{a_{21},a_{32}\}, \ \{a_{21,}a_{43}\},\  \{a_{21},a_{03}\}, \ \{a_{32},a_{43}\}, \ \{a_{32},a_{03}\}
    \right\rbrace \\
    \mathcal{F}_3\left(\widetilde{P}_4\right) &= \left\lbrace \{a_{21},a_{32},a_{43}\}, \ \{a_{21},a_{32},a_{03}\} \right\rbrace \\
    \mathcal{F}_4\left(\widetilde{P}_4\right) &= \left\lbrace \right\rbrace
\end{align*}
Note that there are no incoming forests with four edges in the graph $\widetilde{P}_4$ since the inclusion of all four edges in $\widetilde{P}_4$ would include two outgoing edges from vertex 3.

We can also find the incoming forests on $\widetilde{P}_4^*$ including a path from the input (1) to the output (4) as seen in \cref{fig:tilde}:
\begin{align*}
\mathcal{F}_1^{1,4}\left(\widetilde{P}_4^*\right) &= \left\lbrace \right\rbrace  \\
\mathcal{F}_2^{1,4}\left(\widetilde{P}_4^*\right) &= \left\lbrace \right\rbrace  \\
\mathcal{F}_3^{1,4}\left(\widetilde{P}_4^*\right) &= \left\lbrace \{a_{21},a_{32},a_{43}\}\right\rbrace 
\end{align*}
Note here that we cannot have an incoming forest include a path from 1 to 4 with less than three edges since the distance from 1 to 4 is three.  Also, for the same reason as in the incoming forests on $\widetilde{P}_4$, we cannot include all four edges because we would have two outgoing edges from vertex 3.

Now, if we again consider the input-output equation of the model $\cm_3$, we see that the coefficients of the left-hand side correspond exactly to the sum of the productivities of each of the sets in $\mathcal{F}_k\left( \widetilde{P}_4 \right)$, and the right-hand side coefficient is exactly the producivity of the single incoming forest on $\widetilde{P}_4^*$ that includes a path from 1 to 4:

\begin{align*}y_{4}^{(4)}+\overbrace{(a_{21}+a_{32}+a_{03}+a_{43})}^{\mathcal{F}_1\left(\widetilde{P}_4\right)}y_{4}^{(3)} &+ \overbrace{(a_{21}a_{32} + a_{21}a_{03} + a_{21}a_{43} +  a_{32}a_{03} + a_{32}a_{43})}^{\mathcal{F}_2\left(\widetilde{P}_4\right)}y_{4}^{(2)} \\ 
&+\underbrace{(a_{21}a_{32}a_{43} + a_{21}a_{32}a_{03})}_{\mathcal{F}_3\left(\widetilde{P}_4\right)}y_{4}^{(1)}  = \underbrace{(a_{21}a_{32}a_{43})}_{\mathcal{F}_3^{1,4}\left(\widetilde{P}_4^*\right)}u_{1}
\end{align*}
Again, because we have no incoming forests with four edges on $\widetilde{P}_4$, the coefficient of $y_4^{(0)}$ is zero.  Likewise, the coefficients of both $u_1^{(1)}$ and $u_1^{(2)}$ are also zero since there are no incoming forests on $\widetilde{P}_4^*$ containing a path from 1 to 4.
\end{ex}

As previously mentioned, the main focus of this work is on the indistinguishability characteristic of linear compartmental models.  This problem first appears in the literature in the early 1990's as Godfrey and Chapman describe four necessary conditions for more general indistinguishability of linear compartmental models \cite{godfrey-chapman}.  Shortly thereafter, Zhang, Collins, and King \cite{zhang1991} developed algorithms to check that these properties described by Chapman and Godfrey held, and to manually check indistinguishability of linear compartmental models.  More recently, work by Bortner and Meshkat \cite{BortnerMeshkat-Indist} have justified these necessary conditions given by Godfrey and Chapman in the context of the graph theoretic approach to generating input-output equations, and also derived the first known sufficient conditions for [permutation] indistinguishability of a family of linear compartmental models.  Specifically, Bortner and Meshkat focus on this class of skeletal path models as was defined in \cref{def:skeletalpathmodels}.  In that work, Bortner and Meshkat develop sufficient conditions for permutation indistinguishability of different types of skeletal path models, including the following two results:

First, every path models consisting of only the path from the input to the output and a single leak somewhere other than the output node are indistinguishable, or more formally:
\begin{thm} 
    The path models $\cm_i=(P_n,\{1\},\{n\},\{i\})$ and $\cm_k = (P_n,\{1\},\{n\},\{k\})$ are [permutation] indistinguishable for all $1\leq i <k <n$.
\end{thm}

Second, the path model with a leak in the second to last compartment is indistinguishable with the path model with no leaks but the additional edge from the last node to the second to last node in the path:
\begin{thm} 
    The path models $\cm_{n-1}=(P_n,\{1\},\{n\},\{n-1\})$ and $\cm_n = (H,\{1\},\{n\},\emptyset)$ where $H=P_n \cup \{n \to n-1\}$ are [permutation] indistinguishable.
\end{thm}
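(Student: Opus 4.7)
The plan is to apply \cref{thm:ioeq} to both models, compare the incoming-forest sums that produce the coefficients, and observe that under the relabeling $\Phi$ sending the leak parameter $a_{0,n-1}$ of $\cm_{n-1}$ to the reverse-edge parameter $a_{n-1,n}$ of $\cm_n$ (and fixing $a_{21}, a_{32}, \ldots, a_{n,n-1}$), the two coefficient tuples coincide.

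The key structural observation is that $\widetilde{G}$ for the two models differs only in one edge: in both cases $\widetilde{G}$ is the path $P_n$ together with a single extra edge whose label I will call $X$, where $X = a_{0,n-1}$ for $\cm_{n-1}$ and $X = a_{n-1,n}$ for $\cm_n$. I would then verify that in both graphs the only restriction preventing a subset of edges from being an incoming forest is that $X$ cannot coexist with the path edge $a_{n,n-1}$: for $\cm_{n-1}$ because they both leave vertex $n-1$, and for $\cm_n$ because they form a $2$-cycle. Every other pair of edges is compatible in either graph.

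Setting $L = \{a_{21}, a_{32}, \ldots, a_{n-1,n-2}\}$ and $b = a_{n,n-1}$, a short case split on whether an incoming forest contains $X$ yields
\[
\sum_{F \in \mathcal{F}_k(\widetilde{G})} \pi_F ~=~ \sigma_k(L) + (b + X)\,\sigma_{k-1}(L),
\]
which has the same algebraic form for both models. Applying \cref{thm:ioeq} then shows the coefficients $c_0, \ldots, c_{n-1}$ agree under $\Phi$. For the $d_i$ coefficients, observe that $\widetilde{G}^{\ast}$ equals $\widetilde{G}$ for $\cm_{n-1}$ (since vertex $n$ already has no outgoing edges), while $\widetilde{G}^{\ast}$ for $\cm_n$ is exactly $P_n$ (the edge $n \to n-1$ is removed). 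In either case, the only incoming forest containing a path from $1$ to $n$ is the full path $P_n$ itself: for $\cm_{n-1}$, adding $X = a_{0,n-1}$ to the path would give vertex $n-1$ two outgoing edges, and for $\cm_n$ the extra edge is already absent from $\widetilde{G}^{\ast}$. Thus $d_{n-2} = a_{21}a_{32}\cdots a_{n,n-1}$ and $d_i = 0$ for $i < n-2$ in both models, matching trivially under $\Phi$.

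The main obstacle is verifying the common counting formula -- specifically, confirming that the two superficially different conflict rules (two outgoing edges at the same vertex versus a $2$-cycle) impose the \emph{same} combinatorial constraint on incoming forests, so that a single closed form applies to both models. Once that equivalence is pinned down, all that remains is bookkeeping with elementary symmetric polynomials.
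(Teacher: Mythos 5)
Your proposal is correct and follows essentially the same route as the paper: the same relabeling $\Phi$, the same key observation that in each model exactly one pair of edges is forbidden in an incoming forest (two edges leaving vertex $n-1$ for $\cm_{n-1}$, the $2$-cycle on $\{n-1,n\}$ for $\cm_n$), and the same reduction of the coefficients to elementary symmetric polynomials---your case-split form $\sigma_k(L)+(b+X)\,\sigma_{k-1}(L)$ is algebraically identical to the subtraction form $\sigma_{n-j}(Q)-bX\,\sigma_{n-j-2}(Q\setminus\{b,X\})$ that the paper isolates in \cref{prop:firstsigma} and \cref{prop:secondsigma}. One small slip: under the indexing of \cref{thm:ioeq} the unique nonzero input coefficient is $d_0$ (incoming forests with $n-1$ edges), not $d_{n-2}$, though since you apply the same convention to both models the indistinguishability conclusion is unaffected.
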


Meshkat and Bortner also discuss the fact that permutation indistinguishability is an equivalence relation, so by transitivity this suggests that a model with a leak in any compartment excluding the output compartment is [permutation] indistinguishable with this $\cm_n$ model.

These results derived by Bortner and Meshkat were proven using linear algebra, with an emphasis on the aforementioned Cramer's rule approach to generating the input-output equations.  The purpose of this work is to reprove these results using the graph theoretic generation of the input-output equation as a proof of concept for future indistinguishability work.

\section{Indistinguishability Proofs via Graph Theory}

Before we prove the indistinguishability results, we first develop a generation of the input-output equation of these specific skeletal path models using the graph theoretic generation described in \cref{thm:ioeq}.

\begin{proposition}\label{prop:firstsigma}
    Consider a skeletal path model $\mathcal{M}_i=(P_n, \{1\}, \{n\},  \{i\})$ with a single input and output, and $i<n$.  Write the input-output equation as: 
	\begin{equation}\label{eq:inout}
	y_{n}^{(n)}+c_{n-1}y_{n}^{(n-1)}+\cdots + c_1y_{n}'
	 ~= ~ d_0u_{1}~.
	\end{equation} 
 Let $Q_i=\mathcal{P}(\cm_i)$.  Then the coefficients of this input-output equation are as follows:
\begin{align*}
c_j &= \sigma_{n-j}(Q_i) - \left(a_{0i}a_{(i+1)i}\right)\cdot  \sigma_{n-j-2}(Q_i\setminus\{a_{0i},a_{i+1,i}\}) \ \text{ for } j\in \{1,2,\ldots, n-1\}, \\
d_0 &= a_{21}a_{32}\cdots a_{n(n-1)}.
\end{align*}    
\end{proposition}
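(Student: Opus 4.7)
The plan is to apply \cref{thm:ioeq} and enumerate the incoming forests of $\widetilde{P}_n$ (which is $P_n$ with the added leak edge $i \to 0$) by exploiting the very simple topology of this auxiliary graph. Note that the parameter set $Q_i$ has exactly $n$ elements, namely the $n-1$ backbone edges $a_{21},a_{32},\dots,a_{n(n-1)}$ together with the leak $a_{0i}$, matching the $n$ edges of $\widetilde{P}_n$.

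The first key observation is that the underlying undirected graph of $\widetilde{P}_n$ is a tree (a path with a single pendant vertex $0$ attached at compartment $i$), so every edge subset is automatically acyclic. Hence the only way a subset of edges can fail to be an incoming forest is by violating the outgoing-degree condition. Inspecting the vertices of $\widetilde{P}_n$, one sees that only vertex $i$ has two outgoing edges, namely $i \to i+1$ and $i \to 0$. Therefore $\mathcal{F}_k(\widetilde{P}_n)$ consists of precisely those $k$-element subsets of edges that do \emph{not} contain both $a_{i+1,i}$ and $a_{0i}$.

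From here the coefficient formula for $c_j$ follows by inclusion–exclusion. The sum of productivities over \emph{all} $k$-subsets of $Q_i$ is by definition $\sigma_k(Q_i)$, and from this we subtract the sum of productivities over those $k$-subsets that contain both forbidden edges. The latter sum factors as $a_{0i}a_{i+1,i}$ times the sum of all $(k-2)$-subset productivities of $Q_i \setminus \{a_{0i},a_{i+1,i}\}$, which equals $\sigma_{k-2}(Q_i \setminus \{a_{0i},a_{i+1,i}\})$. Taking $k = n-j$ and invoking \cref{thm:ioeq} yields the stated expression for $c_j$.

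For $d_0$, I would first note that no edge leaves the output vertex $n$ in $\widetilde{P}_n$, so $\widetilde{P}_n^{\ast}=\widetilde{P}_n$, and then argue that any $(n-1)$-edge forest containing a directed path from $1$ to $n$ must include all $n-1$ backbone edges of $P_n$. This leaves exactly one such forest, whose productivity is $a_{21}a_{32}\cdots a_{n(n-1)}$, giving $d_0$ as claimed. There is no serious obstacle in this argument; the entire proof reduces to the single combinatorial observation that the outgoing-degree condition at vertex $i$ is the only obstruction, and the mild bookkeeping needed to translate it into elementary symmetric polynomials.
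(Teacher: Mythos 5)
Your proposal is correct and follows essentially the same route as the paper's own proof: both arguments observe that the underlying undirected graph of $\widetilde{P}_n$ is acyclic, that the only obstruction to being an incoming forest is the pair of outgoing edges $a_{0i}$ and $a_{(i+1)i}$ at vertex $i$, and that subtracting the productivities of the subsets containing both yields the elementary-symmetric-polynomial formula, with the same identification $\widetilde{P}_n^{\ast}=\widetilde{P}_n$ for the $d_0$ coefficient. No substantive differences to report.
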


\begin{proof}
Input-output equations of linear compartmental models can be generated using two specific criterion, given \cref{thm:ioeq}, 1) incoming and 2) forest conditions.  From \cref{thm:ioeq}, the $c_j$ coefficient of the input-output equation of the skeletal path model $\cm_i$ is going to be the sum of the productivities of all incoming forests on $\widetilde{P}_n$ with $n-j$ edges, i.e. 
\[c_j := \sum_{F \in \mathcal{F}_{n-j}(\widetilde{P}_n)} \pi_F.
\]
Note that $\widetilde{P}_n$ is the directed path graph $P_n$ with an additional vertex labeled ``$0$'' and an additional edge $i \to 0$.  

Elementary symmetric polynomials also follow a similar structure pattern, where $\sigma_{n-j}(Q_i)$ is the sum of all possible combinations of $n-j$ parameters (or edges).
Note that $\sigma_{n-j}(Q_i)$ contains the sum of the productivity of every possible incoming forests on $\widetilde{P}_n$ with $n-j$ edges, and possibly other combinations of parameters that do not correspond to incoming forests on $\widetilde{P}_n$.  Symbolically, we can write this relationship as 
\begin{equation}\label{eq:sigma}
\sigma_{n-j}(Q_i) = \left( \sum_{F \in \mathcal{F}_{n-j}(\widetilde{P}_n)} \pi_F \right) + B_{n-j}\end{equation}
where $B_{n-j}$ is the sum of all combinations of $n-j$ edges that do \textbf{not} correspond to incoming forests on $\widetilde{P}_n$.  

Now, the skeletal path model $\cm_i$ is defined with exactly one leak in compartment $i$ with $1\leq i \leq n-1$ and parameters $Q_i = \lbrace a_{21}, \ldots , a_{n(n-1)}, a_{0i} \rbrace $. Note that there does not exist a cycle in the $\widetilde{P}_n$ graph corresponding to the skeletal path model $\cm_i$, because there are no cycles in the underlying undirected graph $P_n$, and the addition of the vertex $0$ and the edge $i\to 0$ does not create a cycle.  Thus, the only possible non-incoming forests in $\sigma_{n-j}(Q_i)$ for the model $\cm_i$ are the products containing two edges which leave the same vertex.  In the path model $\cm_i$, this can only occur in the leak vertex, with the edges $a_{0i}$ and $a_{(i+1)i}$. 

Therefore, we can generate all possible non-incoming forests on $\widetilde{P}_n$ with $n-j$ edges by generating the symmetric polynomial with $n-j-2$ parameters on the set of parameters not including the two edges $a_{0i}$ and $a_{(i+1)i}$, and multiplying $a_{0i}$ and $a_{(i+1)i}$ to them, i.e. 
\[B_{n-j}=(a_{0i}a_{(i+1)i}) \cdot \sigma_{n-j-2}(Q_i\setminus\{a_{0i},a_{(i+1)i}\}).\]  Thus, we can rewrite \cref{eq:sigma} as 
\[
\sigma_{n-j}(Q_i)=\underbrace{\sum_{F \in \mathcal{F}_{n-j}(\widetilde{P}_n)} \pi_F}_{\text{incoming forests}}  + \underbrace{(a_{0i}a_{(i+1)i}) \cdot \sigma_{n-j-2}(Q_i \setminus \{a_{0i},a_{(i+1)i}\})}_{\text{\textbf{non}-incoming forests}}
\]
Manipulating this equation, we get 
\[
c_j := \sum_{F \in \mathcal{F}_{n-j}(\widetilde{P}_n)} \pi_F = \sigma_{n-j}(Q_i)-(a_{0i}a_{(i+1)i}) \cdot \sigma_{n-j-2}(Q_i \setminus \{a_{0i},a_{(i+1)i}\})
\]
as desired.

Note that according to \cref{thm:ioeq}, the coefficient $d_j$ is generated by the incoming forests on $\widetilde{P}_n$ with $n-j-1$ edges and a path from the input $1$ to the output $n$, i.e.
\[
d_j = \sum_{F \in \mathcal{F}_{n-j-1}^{1,n}(\widetilde{P}_{n}^*)}.
\]
Note that $\widetilde{P}_{n} = \widetilde{P}_n^\ast$ since our output node $n$ in $\widetilde{P}_n$ has no edges leaving it.  In the graph $\widetilde{P}_n$, we have exactly one path from the input $1$ to the output $n$, which is every edge along the path $P_n$.  The only parameter not on the path from $1$ to $n$ in $\widetilde{P}_n$ is the leak parameter $a_{0i}$, which cannot be added to any coefficient containing the path from $1$ to $n$, since this path already includes the edge $i \to i+1$ ($a_{(i+1)i}$ parameter), which would break the incoming condition.  Thus the only non-zero coefficient $d_j$ is 
\[d_0 := \sum_{F \in \mathcal{F}_{n-0-1}^{1,n}(\widetilde{P}_{n})} \pi_F = a_{21}a_{32}\cdots a_{n(n-1)}
\]
as desired.

\end{proof}

\begin{ex}\label{ex:firstsigma}
Consider again the model $\cm_3=(P_4,\{1\},\{4\},\{3\})$ from \cref{ex:path} where we define $Q_3:=\mathcal{P}(\cm_3) = \{a_{03},a_{21},a_{32},a_{43}\}$.
Using the previous proposition, we can generate the coefficients of the input-output equation via:
\begin{align*}
c_0 &= \sigma_{4-0}(Q_3) - \left(a_{03}a_{(3+1)3}\right)\cdot  \sigma_{4-0-2}(Q_3\setminus\{a_{03}a_{(3+1)3}\}) \\
&= a_{21}a_{32}a_{43}a_{03} - (a_{03}a_{43})\cdot (a_{21}a_{32})\\
&= 0 \\
c_1 &= \sigma_{4-1}(Q_3) - \left(a_{03}a_{(3+1)3}\right)\cdot  \sigma_{4-1-2}(Q_3\setminus\{a_{03}a_{(3+1)3}\}) \\
&= a_{21}a_{32}a_{43} + a_{21}a_{32}a_{03} + a_{21}a_{43}a_{03} + a_{32}a_{43}a_{03} - (a_{03}a_{43}) \cdot (a_{21}) - (a_{03}a_{43})\cdot (a_{32}) \\
&= a_{21}a_{32}a_{43} + a_{21}a_{32}a_{03}\\
c_2 &= \sigma_{4-2}(Q_3) - \left(a_{03}a_{(3+1)3}\right)\cdot  \sigma_{4-2-2}(Q_3\setminus\{a_{03}a_{(3+1)3}\})   \\
&=a_{21}a_{32}+a_{21}a_{43}+a_{21}a_{03}+a_{32}a_{43}+a_{32}a_{03}+a_{43}a_{03} - (a_{03}a_{43})\cdot 1\\
&=a_{21}a_{32}+a_{21}a_{43}+a_{21}a_{03}+a_{32}a_{43}+a_{32}a_{03}\\
c_3 &= \sigma_{4-3}(Q_3) - \left(a_{03}a_{(3+1)3}\right)\cdot  \sigma_{4-3-2}(Q_3\setminus\{a_{03}a_{(3+1)3}\}) \\
&=a_{21}+a_{32}+a_{43}+a_{03} - (a_{03}a_{43})\cdot 0\\
&=a_{21}+a_{32}+a_{43}+a_{03}\\
d_0 &= a_{21}a_{32}a_{43}\\
\end{align*}
This aligns exactly with our initial derivation of the input-output equation of this model found in both \cref{ex:ioequation} using differential substitution, and \cref{ex:iobygraph} using the general graph theoretic generation.
\end{ex}

Now, we state and prove a similar result related to our model $\cm_n$.

\begin{proposition}\label{prop:secondsigma}
    Consider a linear compartmental model $\mathcal{M}_{n}=(H, \{1\}, \{n\}, \emptyset)$ where $H=P_n \cup \{n\to n-1\}$ with a single input and output.  Write the input-output equation as: 
	\begin{equation}\label{eq:inout}
	y_{n}^{(n)}+c_{n-1}y_{n}^{(n-1)}+\cdots + c_1y_{n}'
	 ~= ~ d_0u_{1}~.
	\end{equation} 
 Let $Q_n = \mathcal{P}(\cm_n)$.  Then the coefficients of this input-output equation are as follows:
\begin{align*}
c_j &= \sigma_{n-j}(Q_n) - \left(a_{n(n-1)}a_{(n-1)n}\right)\sigma_{n-j-2}(Q_n\setminus\{a_{n(n-1)},a_{(n-1)n}\}) \ \text{ for } j \in \{1,2,\ldots, n-1\}\\
d_0 &= a_{21}a_{32}\cdots a_{n,n-1}.
\end{align*}    
\end{proposition}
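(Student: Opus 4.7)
The plan is to follow the template of \cref{prop:firstsigma} exactly, applying \cref{thm:ioeq} to generate the coefficients $c_j$ and $d_0$ via incoming forests on the appropriate associated graphs $\widetilde{H}$ and $\widetilde{H}^*$. First I would observe that because $\cm_n$ has no leaks, no auxiliary vertex $0$ is added, so $\widetilde{H}=H$; moreover, $\widetilde{H}^* = P_n$, since the unique edge leaving the output vertex $n$ is $n\to n-1$, which is deleted when forming $\widetilde{H}^*$. The parameter set is $Q_n = \{a_{21}, a_{32}, \ldots, a_{n(n-1)}, a_{(n-1)n}\}$, of size $n$.

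For the $c_j$ coefficients, I would again decompose
\[
\sigma_{n-j}(Q_n) \;=\; \sum_{F \in \mathcal{F}_{n-j}(\widetilde{H})} \pi_F \;+\; B_{n-j},
\]
where $B_{n-j}$ is the sum of productivities of all $(n-j)$-subsets of $Q_n$ that fail to correspond to an incoming forest on $\widetilde{H}$. The crux is to identify precisely which subsets are bad. Each vertex of $H$ has at most one outgoing edge: vertices $1, 2, \ldots, n-1$ each have a unique outgoing edge along the path, and vertex $n$ has the single outgoing edge $n \to n-1$. Consequently, the ``two outgoing edges from the same vertex'' obstruction that appeared in \cref{prop:firstsigma} cannot arise here, and the only way to fail the incoming forest condition is to create a cycle in the underlying undirected graph. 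Since the underlying undirected graph of $P_n$ is acyclic, the sole cycle introduced in $H$ is the length-two cycle formed between vertices $n-1$ and $n$ by the edges corresponding to $a_{n(n-1)}$ and $a_{(n-1)n}$.

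Therefore the bad subsets are exactly those containing both $a_{n(n-1)}$ and $a_{(n-1)n}$, so
\[
B_{n-j} \;=\; a_{n(n-1)}\,a_{(n-1)n}\;\sigma_{n-j-2}\!\left(Q_n \setminus \{a_{n(n-1)}, a_{(n-1)n}\}\right),
\]
and rearranging yields the claimed formula for $c_j$. For $d_0$, I would note that $\widetilde{H}^* = P_n$ contains a unique path from $1$ to $n$, namely the whole path $P_n$ itself, which uses all $n-1$ edges $a_{21}, a_{32}, \ldots, a_{n(n-1)}$. Since $P_n$ has only $n-1$ edges to begin with, this path is the unique element of $\mathcal{F}_{n-1}^{1,n}(\widetilde{H}^*)$, yielding $d_0 = a_{21}a_{32}\cdots a_{n(n-1)}$; for $j \geq 1$, no subset of fewer than $n-1$ edges of $P_n$ can contain the required path from $1$ to $n$, so the higher $d_j$ vanish as implicit in the statement.

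The main (and essentially only nontrivial) step is the exhaustive verification that the $2$-cycle $\{a_{n(n-1)},a_{(n-1)n}\}$ is the \emph{only} source of bad subsets. This requires simultaneously ruling out additional cycles, which is straightforward from the tree structure of $P_n$, and additional ``two outgoing edges from the same vertex'' violations, which follows from a direct inspection of the outdegrees in $H$. Everything else parallels the argument in \cref{prop:firstsigma} verbatim, with $\{a_{0i}, a_{(i+1)i}\}$ replaced by $\{a_{n(n-1)}, a_{(n-1)n}\}$.
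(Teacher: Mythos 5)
Your proposal is correct and follows essentially the same route as the paper's proof: the same decomposition of $\sigma_{n-j}(Q_n)$ into incoming-forest terms plus a remainder $B_{n-j}$, the same identification of the unique $2$-cycle on $\{n-1,n\}$ as the only obstruction (with the outdegree check ruling out the other failure mode), and the same observation that $\widetilde{H}^*=P_n$ forces $d_0$ to be the productivity of the full path. No gaps.
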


\begin{proof} As in the proof of the previous proposition, we will again note that 
\begin{equation}\label{eq:sigma2}
\sigma_{n-j}(Q_n) = \left( \sum_{F \in \mathcal{F}_{n-j}(\widetilde{H})} \pi_F \right) + B_{n-j}\end{equation}
where the left summand represents the $c_j$ coefficients, and thus the incoming forests on $\widetilde{H}$ with $n-j$ edges, and the $B_{n-j}$ represents combinations of $n-j$ edges in $\widetilde{H}$ that are \textbf{not} incoming forests.

The skeletal path model $\cm_n$ has parameters $Q_n = \lbrace a_{21}, \ldots , a_{n(n-1)}, a_{(n-1)n} \rbrace $, and contains no leaks.  Again, as in the previous proof, we consider the possible combinations of edges that could break either the incoming or forest conditions of an incoming forest.  In the case of $\widetilde{H}$, each vertex has exactly one outgoing edge, meaning that no combination of edges could break the incoming condition.  This is, however, exactly one cycle in $\widetilde{H}$, namely the cycle between $n-1$ and $n$. Therefore, we can generate all possible combinations of $n-j$ parameters including these two edges by generating the elementary symmetric polynomial with $n-j-2$ parameters on the set of parameters not including these edges, and multiplying $a_{n(n-1)}$ and $a_{(n-1)n}$ to them, i.e. 
\[B_{n-j}=(a_{n(n-1)}a_{(n-1)n}) \cdot \sigma_{n-j-2}(Q_n\setminus\{a_{n(n-1)}a_{(n-1)n}\}).\]  Thus, we can rewrite \cref{eq:sigma2} as 
\[
\sigma_{n-j}(Q_n)=\underbrace{\sum_{F \in \mathcal{F}_{n-j}(\widetilde{H})} \pi_F}_{\text{incoming forests}}  + \underbrace{(a_{n(n-1)}a_{(n-1)n}) \cdot \sigma_{n-j-2}(Q_n \setminus \{a_{n(n-1)}a_{(n-1)n}\})}_{\text{\textbf{not} incoming forests}}
\]
Manipulating this equation, we get 
\[
c_j := \sum_{F \in \mathcal{F}_{n-j}(\widetilde{H})} \pi_F = \sigma_{n-j}(Q_n)-(a_{n(n-1)}a_{(n-1)n}) \cdot \sigma_{n-j-2}(Q_n \setminus \{a_{n(n-1)}a_{(n-1)n}\})
\]
as desired.

For the coefficient of $u_1$, we note again that according to \cref{thm:ioeq}, the coefficient $d_j$ is generated by the incoming forests on $\widetilde{H}^*$ with $n-j-1$ edges and a path from the input 1 to the output $n$, i.e.
\[
d_j := \sum_{F \in \mathcal{F}_{n-j-1}^{1,n}(\widetilde{H}^*)} \pi_F
\]
Note here that $\widetilde{H}^*=P_n$ since we remove any edge which leaves the output vertex, in this case $n\to n-1$, which was the only edge in $H$ that was not on the path from 1 to $n$.  Thus, as in the previous proof, we have exactly one incoming forest (with any number of edges) on $P_n$ that contains a path from $1$ to $n$, namely $P_n$ itself, so
\[d_0 := \sum_{F \in \mathcal{F}_{n-0-1}^{1,n}(\widetilde{H}^*)} \pi_F = \sum_{F \in \mathcal{F}_{n-0-1}^{1,n}(P_n)} \pi_F = a_{21}a_{32}\cdots a_{n(n-1)}
\]
as desired.
\end{proof}

Using these two propositions, we will now prove the main two theorems outlined in \cref{sect:indist}.  First, we prove the indistinguishability of $\cm_{n-1}$ and $\cm_n$.

\begin{thm}\label{thm:indist_cycle}
    The path models $\cm_{n-1}=(P_n,\{1\},\{n\},\{n-1\})$ and $\cm_n = (H,\{1\},\{n\},\emptyset)$ where $H=P_n \cup \{n \to n-1\}$ are permutation indistinguishable.
\end{thm}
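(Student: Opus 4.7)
The plan is to invoke the two propositions just established and then exhibit an explicit bijection of parameters under which the coefficient formulas coincide termwise. Concretely, I would first specialize \cref{prop:firstsigma} to $i=n-1$, writing $Q_{n-1}=\{a_{21},a_{32},\ldots,a_{n(n-1)},a_{0,n-1}\}$, so that
\[
c_j^{(\cm_{n-1})} \;=\; \sigma_{n-j}(Q_{n-1}) \;-\; (a_{0,n-1}\,a_{n(n-1)})\cdot \sigma_{n-j-2}\bigl(Q_{n-1}\setminus\{a_{0,n-1},a_{n(n-1)}\}\bigr),
\]
for $j\in\{1,\ldots,n-1\}$, with $d_0^{(\cm_{n-1})}=a_{21}a_{32}\cdots a_{n(n-1)}$. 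Then I would read off from \cref{prop:secondsigma}, with $Q_n=\{a_{21},a_{32},\ldots,a_{n(n-1)},a_{(n-1)n}\}$, the analogous formulas
\[
c_j^{(\cm_{n})} \;=\; \sigma_{n-j}(Q_{n}) \;-\; (a_{n(n-1)}\,a_{(n-1)n})\cdot \sigma_{n-j-2}\bigl(Q_{n}\setminus\{a_{n(n-1)},a_{(n-1)n}\}\bigr),
\]
and $d_0^{(\cm_{n})}=a_{21}a_{32}\cdots a_{n(n-1)}$.

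Next, I would define the parameter bijection $\Phi\colon\mathcal{P}(\cm_{n-1})\to\mathcal{P}(\cm_{n})$ by $\Phi(a_{(k+1)k})=a_{(k+1)k}$ for every $k\in\{1,\ldots,n-1\}$ and $\Phi(a_{0,n-1})=a_{(n-1)n}$. Under $\Phi$, the set $Q_{n-1}$ maps (as a multiset) onto $Q_n$, the distinguished pair $\{a_{0,n-1},a_{n(n-1)}\}$ maps onto $\{a_{(n-1)n},a_{n(n-1)}\}$, and the complements $Q_{n-1}\setminus\{a_{0,n-1},a_{n(n-1)}\}$ and $Q_n\setminus\{a_{(n-1)n},a_{n(n-1)}\}$ correspond under $\Phi$. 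Because elementary symmetric polynomials depend only on the underlying multiset of their arguments, applying $\Phi$ to the expression for $c_j^{(\cm_{n-1})}$ produces exactly $c_j^{(\cm_{n})}$ for every $j$, while $\Phi$ fixes each factor of $d_0$. By the definition of permutation indistinguishability, this finishes the proof.

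There is essentially no obstacle once the two propositions are in hand: the whole argument reduces to observing that the two coefficient formulas have the same shape, differing only in which pair of edges is singled out for the "bad configuration" correction term. The one thing I would double check is the index arithmetic: in $\cm_{n-1}$ the removed pair is (leak out of $n-1$, path-edge out of $n-1$), which enforces the incoming condition failure, while in $\cm_n$ the removed pair is (path-edge out of $n-1$, reverse edge out of $n$), which enforces the forest condition failure. Both pairs contain $a_{n(n-1)}$ and one extra edge, and $\Phi$ matches the extra edges. Once this parallelism is verified, the conclusion is immediate and no further calculation is required.
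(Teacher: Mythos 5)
Your proposal is correct and follows essentially the same route as the paper: specialize the two coefficient propositions, define the bijection fixing all path edges and sending $a_{0,n-1}\mapsto a_{(n-1)n}$, and observe that the distinguished pair and its complement correspond under the map so the elementary symmetric polynomial expressions match termwise, with $d_0$ fixed. (If anything, your citation of \cref{prop:firstsigma} with $i=n-1$ for $\cm_{n-1}$ is the more accurate reference; the paper's proof cites \cref{prop:secondsigma} for both coefficient formulas.)
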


\begin{proof} 

Recall that two models $\cm_i$ and $\cm_j$ are permutation indistinguishable if there exists a bijective map $\Phi$ from the set of parameters of $\cm_{n-1}$, i.e.~$Q_{n-1}=\{a_{21},a_{32},\ldots , a_{n(n-1)},a_{0(n-1)}\}$, to the set of parameters of $\cm_n$, called $Q_n=\{b_{21},b_{32},\ldots , b_{n(n-1)},b_{(n-1)n}\}$, such that the coefficients of $\cm_{n-1}$ map exactly to the coefficients of $\cm_n$ under $\Phi$.  For these models, we propose the following set isomorphism:
\[
\Phi \colon \{\underbrace{a_{21},a_{32},\ldots , a_{n(n-1)},a_{0(n-1)}}_{Q_{n-1}}\} \to \{\underbrace{b_{21},b_{32},\ldots , b_{n(n-1)},b_{(n-1)n}}_{Q_n}\}
\]
where
    \[\Phi(a_{uv}) = \begin{cases}
    b_{(n-1)n} & \text{if } (u,v) = (0,n-1) \\ 
    b_{uv} & \text{otherwise}. \\
    \end{cases}
\]
Note that we change the parameters of $\cm_n$ to be $b_{ij}$ to distinguish what model's parameters we are referencing.

We know that the sets, $Q_{n-1}$ and $Q_n$ have the same cardinality, namely $n$.  Also, $\Phi$ is injective since each parameters maps to its own unique parameter in the image.  Therefore, $\Phi$ is an injective map between $Q_{n-1}$ and $Q_n$ which are finite sets with the same cardinality, meaning $\Phi$ is bijective and hence an isomorphism (under the definition that $\Phi(uv) = \Phi(u)\Phi(v)$ and $\Phi(u+v)=\Phi(u)+\Phi(v)$).

By \cref{prop:secondsigma}, the left-hand coefficient of the input-output equations of $\cm_{n-1}$ and $\cm_n$ respectively are
\begin{align*}
    c_j^{[n-1]} &= \sigma_{n-j}(Q_{n-1}) - (a_{0(n-1)}a_{n(n-1)})\sigma_{n-j-2}(Q_{n-1}\setminus\{a_{0(n-1)},a_{n(n-1)}\}) \\
    c_j^{[n]} &= \sigma_{n-j}(Q_n) - (b_{(n-1)n}b_{n(n-1)})\sigma_{n-j-2}(Q_n \setminus\{b_{(n-1)n},b_{n(n-1)}\}) \\
\end{align*}
where the superscripts denote which model the coefficients belong to.

Note that since $\Phi$ is a set isomorphism, it respects the additive and multiplicative structures, thus 
\begin{align*}
\Phi\left(c_j^{[n-1]}\right) &= \Phi \left(\sigma_{n-j}(Q_{n-1}) - (a_{0(n-1)}a_{n(n-1)})\sigma_{n-j-2}(Q_{n-1} \setminus\{a_{0(n-1)},a_{n(n-1)}\}) \right) \\
&= \Phi \left(\sigma_{n-j}(Q_{n-1})\right) - \Phi \left((a_{0(n-1)}a_{n(n-1)}) \right) \Phi \left(\sigma_{n-j-2}(Q_{n-1} \setminus\{a_{0(n-1)},a_{n(n-1)}\}) \right) \\
& = \sigma_{n-j}(Q_n) - (b_{(n-1)n}b_{n(n-1)})\sigma_{n-j-2}(Q_n \setminus\{b_{(n-1)n},b_{n(n-1)}\}) \\
&= c_j^{[n]}
\end{align*}

First, consider $\Phi \left(\sigma_{n-j}(Q_{n-1})\right)$, where $\sigma_{n-j}(Q_{n-1})$ is the sum of all possible combinations of $n-j$ elements of $Q_{n-1}$, where $|Q_{n-1}| = n$.  Note, that since $\sigma_{n-j}(Q_n)$ is the sum of all possible combinations of $n-j$ elements of $Q_n$, where $|Q_n|=n$, and because $\Phi$ is a bijection between $Q_{n-1}$ and $Q_n$, then
\[\Phi \left(\sigma_{n-j}(Q_{n-1})\right)=\sigma_{n-j}\left(Q_n\right).
\]

Second, note that $\Phi \left(a_{0(n-1)}a_{n(n-1)} \right)=\Phi \left(a_{0(n-1)}\right) \Phi\left(a_{n(n-1)} \right)=b_{(n-1)n}b_{n(n-1)}$ by the definition of $\Phi$.

Finally, consider $\Phi \left(\sigma_{n-j-2}(Q_{n-1} \setminus\{a_{0(n-1)},a_{n(n-1)}\})\right)$.  Note that $\Phi$ maps $Q_{n-1} \setminus\{a_{0(n-1)},a_{n(n-1)}\}$ bijectively to $Q_n \setminus\{b_{(n-1)n},b_{n(n-1)}\}$, meaning that $\Phi$ will map an elementary symmetric polynomial with $n-j-2$ elements of $Q_{n-1} \setminus\{a_{0(n-1)},a_{n(n-1)}\}$ to the elementary symmetric polynomial with $n-j-2$ elements of $Q_n \setminus\{b_{(n-1)n},b_{n(n-1)}\}$, i.e.

\[\Phi \left(\sigma_{n-j-2}(Q_{n-1} \setminus\{a_{0(n-1)},a_{n(n-1)}\})\right) = \sigma_{n-j-2}(Q_n \setminus\{b_{(n-1)n},b_{n(n-1)}\})
\]

For the right-hand coefficient, note that
\begin{align*}
    d_0^{[n-1]} &= a_{21}a_{32}\cdots a_{n(n-1)} \\
    d_0^{[n]} &= b_{21}b_{32}\cdots b_{n(n-1)}. \\
\end{align*}
Therefore, by the definition of $\Phi$, 
\begin{align*}
    \Phi(d_0^{[n-1]}) &= \Phi(a_{21}a_{32} \cdots  a_{n(n-1)}) \\ 
    &= \Phi(a_{21})\Phi(a_{32})\cdots \Phi(a_{n(n-1)}) \\ 
    &= b_{21}b_{32}\cdots  b_{n(n-1)} \\
    &= d_0^{[n]}.
\end{align*}
Thus, the models $\cm_{n-1}$ and $\cm_n$ are permutation indistinguishable under the map $\Phi$ as defined.
\end{proof}

Now we move onto the proof of the other main theorem, namely that any skeletal path model $\cm_i$ is indistinguishable with another path model $\cm_k$ as long as $1\leq i,k<n$.  This proof is very similar to the previous proof with a slightly more complicated indistinguishability map.

\begin{thm} 
    The path models $\cm_i=(P_n,\{1\},\{n\},\{i\})$ and $\cm_k = (P_n,\{1\},\{n\},\{k\})$ are [permutation] indistinguishable for all $1\leq i <k <n$.
\end{thm}

\begin{proof} 
For these models $\cm_i$ and $\cm_k$, we propose the following set isomorphism:
\[
\Phi \colon \{\underbrace{a_{21},a_{32},\ldots , a_{n(n-1)},a_{0i}}_{Q_i}\} \to \{\underbrace{b_{21},b_{32},\ldots , b_{n(n-1)},b_{0k}}_{Q_k}\}
\]
where
    \[\Phi(a_{uv}) = \begin{cases}
    b_{0k} & \text{if } (u,v) = (0,i) \\
    b_{k+1,k} & \text{if } (u,v) = (i+1,i) \\
    b_{i+1,i} & \text{if } (u,v) = (k+1,k) \\
    b_{uv} & \text{otherwise}. \\
    \end{cases}
    \]
In other words, we map the leak to the leak, and swap the edges on the paths of both models occurring out of the leak vertex.

Again, we know that the sets, $Q_i$ and $Q_k$ have the same cardinality and that $\Phi$ is injective since each element maps to its own unique element in the image, meaning $\Phi$ is an isomorphism.

By \cref{prop:firstsigma}, the left-hand coefficient of the input-output equations of $\cm_i$ and $\cm_k$ respectively are
\begin{align*}
    c_j^{[i]} &= \sigma_{n-j}(Q_i) - (a_{0i}a_{(i+1)i})\sigma_{n-j-2}(Q_i \setminus\{a_{0i},a_{(i+1)i}\}) \\
    c_j^{[k]} &= \sigma_{n-j}(Q_k) - (b_{0k}b_{(k+1)k})\sigma_{n-j-2}(Q_k \setminus\{b_{0k},b_{(k+1)k}\}) \\
\end{align*}
where the superscripts denote which model the coefficients belong to.

Note that since $\Phi$ is a set isomorphism, it respects the additive and multiplicative structures, thus 
\begin{align*}
\Phi\left(c_j^{[i]}\right) &= \Phi \left(\sigma_{n-j}(Q_i) - (a_{0i}a_{(i+1)i})\sigma_{n-j-2}(Q_i \setminus\{a_{0i},a_{(i+1)i}\}) \right) \\
&= \Phi \left(\sigma_{n-j}(Q_i)\right) - \Phi \left((a_{0i}a_{(i+1)i}) \right) \Phi \left(\sigma_{n-j-2}(Q_i \setminus\{a_{0i},a_{(i+1)i}\}) \right) \\
& = \sigma_{n-j}(Q_k) - (b_{0k}b_{(k+1)k}) \sigma_{n-j-2}(Q_k \setminus\{b_{0k},b_{(k+1)k}\}) \\
&= c_j^{[k]}
\end{align*}
First, again because $\Phi$ is a bijection between $Q_i$ and $Q_k$, it maps the elementary symmetric polynomials with a certain number of elements on $Q_i$ to the corresponding elementary symmetric polynomials with that number of elements on $Q_k$:
\[\Phi \left(\sigma_{n-j}(Q_i)\right)=\sigma_{n-j}\left(Q_k\right).
\]
Second, note that $\Phi \left(a_{0i}a_{(i+1)i} \right)=\Phi \left(a_{0i}\right) \Phi\left(a_{(i+1)i} \right)=b_{0k}b_{(k+1)k}$ by the definition of $\Phi$.

Finally, consider $\Phi \left(\sigma_{n-j-2}(Q_i \setminus\{a_{0i},a_{(i+1)i}\})\right)$.  Note that $\Phi$ maps $Q_i \setminus\{a_{0i},a_{(i+1)i}\}$ bijectively to $Q_k \setminus\{b_{0k},b_{(k+1)k}\}$, meaning again that

\[\Phi \left(\sigma_{n-j-2}(Q_i \setminus\{a_{0i},a_{(i+1)i}\})\right) = \sigma_{n-j-2}(Q_k \setminus\{b_{0k},b_{(k+1)k}\})
\]

For the coefficient of $u_1$, \cref{prop:firstsigma} tells us that
\begin{align*}
    d_0^{[i]} &= a_{21}a_{32}\cdots a_{n(n-1)} \\
    d_0^{[k]} &= b_{21}b_{32}\cdots b_{n(n-1)}. \\
\end{align*}
Therefore, by the definition of $\Phi$, 
\begin{align*}
    \Phi(d_0^{[i]}) &= \Phi(a_{21}a_{32}\cdots a_{(i+1)i} \cdots a_{(k+1)k}\cdots  a_{n(n-1)}) \\
    &= \Phi(a_{21})\Phi(a_{32})\cdots \Phi(a_{(i+1)i}) \cdots \Phi(a_{(k+1)k})\cdots \Phi(a_{n(n-1)}) \\ 
    &= b_{21}b_{32}\cdots b_{(k+1)k} \cdots b_{(i+1)i} \cdots  b_{n(n-1)} \\
    &= d_0^{[k]}.
\end{align*}

Therefore, the models $\cm_{i}$ and $\cm_{k}$ are permutation indistinguishable under the map $\Phi$ as defined.
\end{proof}

\section{Conclusion}
In this work, we have reproved results of Bortner and Meshkat related to the sufficient conditions for indistinguishability of certain subclasses of skeletal path models.  Specifically, we focused on the indistinguishability of a model consisting of an underlying path, and any leak along the path, or the edge from the last vertex to the second to last vertex in the path using a graph theoretic approach.  We believe this graph theoretic approach could be extended to prove indistinguishability of other families of models that have well understood graphical structure, but less well understood compartmental matrix structure.

\bibliographystyle{plain}
\bibliography{indist-ug}

\end{document}